\newcommand{\Z}{\mathbb Z}
\newcommand{\fqn}{\mathbb{F}_{q^n}}
\newcommand{\F}{\mathbb{F}}
\newcommand{\ord}{\mathrm{ord}}
\newcommand{\q}{\mathcal P}
\newcommand{\m}{\mu_q}
\newcommand{\R}{\rho_q}
\newtheorem{theorem}{Theorem}[section]
\newtheorem{proposition}[theorem]{Proposition}
\newtheorem{definition}[theorem]{Definition}
\newtheorem{lemma}[theorem]{Lemma}
\newtheorem{corollary}[theorem]{Corollary}
\author[Lucas Reis]{Lucas Reis}
\address{Departamento de Matem\'{a}tica, Universidade Federal de Minas Gerais, Belo Horizonte MG, 31270901, Brazil}
\email{lucasreismat@gmail.com}
\title{The average density of K-normal elements over finite fields}
\keywords{mean value theorem; $k$-normal elements; finite fields}
\date{\today
}
\subjclass[2010]{11H60 (primary), 11N37 and 11T30 (secondary)} 
\begin{document}
\maketitle

\begin{abstract}
Let $q$ be a prime power and, for each positive integer $n\ge 1$, let $\F_{q^n}$ be the finite field with $q^n$ elements. Motivated by the well known concept of normal elements over finite fields, Huczynska et al (2013) introduced the notion of $k$-normal elements. More precisely, for a given $0\le k\le n$, an element $\alpha\in \F_{q^n}$ is $k$-normal over $\F_q$ if the $\F_q$-vector space generated by the elements in the set $\{\alpha, \alpha^q, \ldots, \alpha^{q^{n-1}}\}$ has dimension $n-k$. The case $k=0$ recovers the normal elements. If $q$ and $k$ are fixed, one may consider the number $\lambda_{q, n, k}$ of elements $\alpha\in \F_{q^n}$ that are $k$-normal over $\F_q$ and the density $\lambda_{q, k}(n)=\frac{\lambda_{q, n, k}}{q^n}$ of such elements in $\F_{q^n}$. In this paper we prove that the arithmetic function $\lambda_{q, k}(n)$ has positive mean value, in the sense that the limit
 $$\lim\limits_{t\to +\infty}\frac{1}{t}\sum_{1\le n\le t}\lambda_{q, k}(n),$$
exists and it is positive. 
\end{abstract}

\section{Introduction}
Let $q$ be a prime power, let $n$ be a positive integer and let $\F_{q^n}$ be the finite field with $q^n$ elements. The field $\F_{q^n}$ can be viewed as an $\F_q$-vector space of dimension $n$. In this context, an element $\beta\in \F_{q^n}$ is {\em normal} over $\F_q$ if the $\F_q$-vector space generated by the set $\{\beta, \ldots, \beta^{q^{n-1}}\}$ has dimension $n$, i.e., this set is an $\F_q$-basis for $\F_{q^n}$. 

Normal elements are quite useful in applications such as computer algebra, due to their efficiency on basic operations (most notably, the exponentiation). We refer to~\cite{GAO}  (and the references therein) for an overview on normal elements, including theoretical and practical aspects.

Motivated by the normal elements over finite fields, in~\cite{HMPT} the authors introduced the concept of $k$-normal elements. More precisely, for $0\le k\le n$, an element $\alpha\in \F_{q^n}$ is $k$-normal over $\F_q$ if the $\F_q$-vector space generated by the set $\{\alpha, \ldots, \alpha^{q^{n-1}}\}$ has dimension $n-k$. In particular, normal elements are just $0$-normal elements. In the same paper, the authors obtained results on the existence and number of $k$-normal elements and proposed many problems. Since then, many papers have provided results on $k$-normal elements, including $k$-normal elements with prescribed multiplicative order: see~\cite{jv, ag, MA, KR, RT18, R19, T} for more details.

Back to normal elements, some works have studied the proportion of elements in $\F_{q^n}$ that are normal over $\F_q$: see~\cite{F00, GP}. More recently, in~\cite{R}, we proved that the proportion $\lambda_{q, 0}(n)$ of elements in $\F_{q^n}$ that are normal over $\F_q$ is an arithmetic function with positive mean value, in the sense that the limit
$$\lim\limits_{t\to +\infty}\frac{1}{t}\sum_{1\le n\le t}\lambda_{q, 0}(n),$$
exists and it is positive. Motivated by the latter, in this paper we provide a generalization of this result to $k$-normal elements. More precisely, we have the following theorem.

\begin{theorem}\label{thm:main}

Let $q$ be a prime power and, for integers $n\ge 1$ and $0\le k\le n-1$, let $\lambda_{q, n, k}$ be the number of elements $\alpha\in \F_{q^n}$ such that $\alpha$ is $k$-normal over $\F_q$. If we set $\lambda_{q, k}(n)=\frac{\lambda_{q, n, k}}{q^n}$, then there exists $\overline{\lambda}_{q, k}\ge 0$ such that
$$\overline{\lambda}_{q, k}=\lim\limits_{t\to +\infty}\frac{1}{t}\sum_{1\le n\le t}\lambda_{q, k}(n).$$
\end{theorem}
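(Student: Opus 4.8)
The plan is to understand the structure of the counting function $\lambda_{q,n,k}$ via the factorization of $x^n - 1$ over $\mathbb{F}_q$, and then to show that the resulting arithmetic function has a Cesàro mean. Let me think about how to prove this.

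First, recall the key structural fact about $k$-normal elements. The set $\{\alpha, \alpha^q, \ldots, \alpha^{q^{n-1}}\}$ spans a space of dimension $n-k$, which is related to the $\mathbb{F}_q[x]$-module structure on $\mathbb{F}_{q^n}$ where $x$ acts as the Frobenius $\sigma: \beta \mapsto \beta^q$. The $\mathbb{F}_q$-linear map $\sigma$ satisfies $\sigma^n = 1$, so $\mathbb{F}_{q^n}$ is an $\mathbb{F}_q[x]/(x^n-1)$-module. In fact, it's a free module of rank 1 (cyclic). An element $\alpha$ is $k$-normal iff the "$\mathbb{F}_q$-Order" (the annihilator polynomial, i.e., the minimal polynomial of $\alpha$ under the Frobenius action, sometimes called the $\mathbb{F}_q$-order or additive order) has degree $n-k$.

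From the paper HMPT (Huczynska et al.), there is a formula: $\alpha$ is $k$-normal iff its $\mathbb{F}_q$-order $g_\alpha$ (the monic polynomial of least degree such that $g_\alpha(\sigma)(\alpha) = 0$) is a divisor of $x^n-1$ of degree $n-k$. The number of elements with a given $\mathbb{F}_q$-order $g$ dividing $x^n - 1$ equals $\Phi_q(g)$, the $\mathbb{F}_q$-analogue of Euler's totient, counting the number of units in $\mathbb{F}_q[x]/(g)$ — more precisely the number of generators of the cyclic module annihilated exactly by $g$. So $\lambda_{q,n,k} = \sum_{g \mid x^n-1, \deg g = n-k} \Phi_q(g)$, where $\Phi_q(g) = |(\mathbb{F}_q[x]/(g))^\times|$ when $g$ is the product over the relevant structure; one must be careful when $\gcd(n,q) > 1$ and $x^n-1$ has repeated factors.

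Since $q$ and $k$ are fixed, write $n = m p^s$ with $p = \mathrm{char}(\mathbb{F}_q)$ and $\gcd(m,p)=1$. Then $x^n - 1 = (x^m-1)^{p^s}$, and the factorization of $x^m-1$ into distinct irreducibles over $\mathbb{F}_q$ is governed by the multiplicative orders of $q$ modulo the divisors of $m$. My plan is to express $\lambda_{q,k}(n)$ in terms of this factorization data and then exhibit it as a sum over the divisor/factorization structure that is amenable to averaging. The main idea for the mean value is that $\lambda_{q,k}(n)$ should decompose into pieces indexed by the way $x^n-1$ factors, and each piece behaves like a multiplicative-type arithmetic function whose Cesàro average can be computed via Dirichlet series or a direct density argument as in the case $k=0$ treated in the reference~\cite{R}.

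The strategy I would follow, in order, is: (1) reduce the counting of $k$-normal elements to a sum over divisors $g$ of $x^n-1$ with $\deg g = n-k$ of the Euler-totient-type quantity $\Phi_q(g)$, obtaining $\lambda_{q,k}(n) = q^{-n}\sum_{\substack{g \mid x^n-1 \\ \deg g = n-k}} \Phi_q(g)$; (2) normalize and express each term $\Phi_q(g)/q^{\deg g}$ as a product $\prod_{P \mid g}(1 - q^{-\deg P})$ over the distinct irreducible factors, reducing the density contribution to a controlled size independent of the high powers; (3) relate the whole expression to the Cesàro mean by grouping the integers $n$ according to the splitting type of $x^n-1$, i.e., according to the finitely-many-at-a-time orders $\mathrm{ord}_d(q)$ for $d \mid n$; (4) establish existence of the limit by bounding the function, showing $0 \le \lambda_{q,k}(n) \le C$ for an explicit constant and then proving the averaged sum converges, for instance by comparing with the $k=0$ case and controlling the error introduced by fixing the degree deficit $k$. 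The hard part, I expect, will be step (3)–(4): unlike $k=0$ where one counts all divisors, fixing $\deg g = n-k$ (equivalently, the complementary divisor has degree $k$) forces a delicate selection among the irreducible factors of $x^n-1$, and the number and degrees of those factors fluctuate with $n$ in an irregular (though ultimately periodic-in-blocks) way. Controlling this combinatorial selection uniformly, and showing that the resulting averages stabilize rather than oscillate, is the principal obstacle; I anticipate handling it by isolating the contribution of small-degree irreducible factors (of which there are boundedly many of each degree) and showing the tail contributes negligibly to the Cesàro average, thereby reducing to a finite computation whose limit can be read off directly.
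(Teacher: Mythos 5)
Your steps (1) and (2) coincide with the paper's Lemma 2.4 (the HMPT counting formula) and Lemma 2.2 (the product/M\"obius form of $\Phi_q$), but the heart of the theorem --- your steps (3) and (4) --- is left as a plan, and the plan as stated does not contain the idea that makes the proof work. You propose to group the integers $n$ by the splitting type of $X^n-1$ and to reduce to a finite computation, while conceding that controlling the selection of degree-$(n-k)$ divisors as $n$ varies is the principal obstacle. The paper dissolves this obstacle by an interchange of summation that your proposal never reaches: since the complementary divisor $F=(X^n-1)/g$ has fixed degree $k$, one sums over the \emph{finite} set $\mathcal M_k$ of all monic degree-$k$ polynomials coprime to $X$, so that
$$\frac{1}{t}\sum_{1\le n\le t}\lambda_{q,k}(n)=\frac{1}{tq^k}\sum_{F\in\mathcal M_k}S_F(t),\qquad S_F(t)=\sum_{\substack{1\le n\le t\\ F\mid X^n-1}}\frac{\Phi_q\bigl((X^n-1)/F\bigr)}{q^{n-k}}.$$
For each fixed $F$, M\"obius inversion over $\mathbb{F}_q[X]$ gives $S_F(t)=\sum_{1\le n\le t}\sum_{FG\mid X^n-1}\mu_q(G)q^{-\deg(G)}$, and the divisibility $FG\mid X^n-1$ holds exactly when $\mathrm{ord}(FG)$ divides $n$; hence $S_F(t)=\sum_{G} \left\lfloor t/\mathrm{ord}(FG)\right\rfloor\mu_q(G)q^{-\deg(G)}$, i.e.\ a main term $t\cdot M_F(t)$ plus an error term. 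Existence of the limit then reduces to two concrete estimates: absolute convergence of $\sum_{G}1/(\mathrm{ord}(G)q^{\deg(G)})$ and an $o(t)$ bound on the error, both proved via the cyclotomic facts (for each $E$ with $\mathrm{ord}_Eq=i$ there are $\varphi(E)/i$ monic irreducibles of degree $i$ and order $E$) together with the divisor bound $\sigma_0(q^i-1)=O(q^{i/2})$.

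Without this interchange, or an equivalent mechanism, your step (3) is a restatement of the problem rather than a solution: the splitting type of $X^n-1$ is governed by the orders $\mathrm{ord}_d q$ for all $d\mid n$, which do not fall into finitely many classes in any way you specify, and nothing in the proposal shows that the resulting averages stabilize rather than oscillate. Your fallback of comparing with the $k=0$ case also does not yield existence of the limit: the paper uses such a comparison only in Corollary 1.2, to bound the mean value from below \emph{after} its existence is established, and the existence argument for general $k$ is not deduced from the $k=0$ case. So the proposal correctly sets up the counting identity but has a genuine gap precisely at the step you yourself flag as the hard part.
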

Theorem~\ref{thm:main} entails that there is a mean value for the function $\lambda_{q, k}(n)$. It turns out that, for a positive proportion of natural numbers $n$, we have that $\lambda_{q, k}(n)\ge \varepsilon\lambda_{q, 0}(n')$, where $n'$ is a liner function on $n$ and $\varepsilon$ depends only on $k$. From this fact and some results from~\cite{R}, we are able to obtain the following result.

\begin{corollary}\label{cor:main}
If $\overline{\lambda}_{q, k}$ is as in Theorem~\ref{thm:main}, we have that $$\overline{\lambda}_{q, k}\ge \frac{\overline{\lambda}_{q, 0}}{p^tq^k}>0,$$ where $p$ is the characteristic of $\F_q$, $t=\lfloor\log_pk\rfloor+1$ if $k>0$ and $t=0$ if $k=0$. Moreover, $\overline{\lambda}_{q, 0}>1-\frac{1}{\sqrt{q}}-\frac{1}{q}$ if $q\ge 4$. 
\end{corollary}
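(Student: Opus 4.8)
The plan is to prove Corollary~\ref{cor:main} by producing an explicit comparison between $k$-normal densities and ordinary normal densities, and then invoking Theorem~\ref{thm:main} together with the known lower bound on $\overline{\lambda}_{q,0}$. The conceptual heart of the matter is the structural characterization of $k$-normal elements via their \emph{$q$-order}: an element $\alpha\in\F_{q^n}$ is $k$-normal over $\F_q$ precisely when the $\F_q$-annihilator of $\alpha$ under the action of the Frobenius $\sigma\colon x\mapsto x^q$ (viewed as a polynomial in $\sigma$ dividing $x^n-1\in\F_q[x]$) has degree exactly $k$. So the first step is to fix a convenient infinite family of exponents $n$ on which the factorization of $x^n-1$ is well understood, namely $n$ of the form $n=p^t\cdot m$ with $\gcd(m,p)=1$ and $m$ chosen so that $x^m-1$ splits favorably; here $t=\lfloor\log_p k\rfloor+1$ is exactly large enough that $p^t>k$, which guarantees the factor $(x-1)^{p^t}\mid x^n-1$ contains a $\sigma$-invariant subspace of the right dimension to absorb a prescribed degree-$k$ annihilator.

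Second, I would construct, for each such $n$, an injection (or at least a sufficiently dense correspondence) from a large set of $0$-normal elements of a subfield $\F_{q^{n'}}$ into the set of $k$-normal elements of $\F_{q^n}$, where $n'$ is the linear function of $n$ promised in the text (essentially $n'=n/p^t$ or a comparable reduction). Concretely, if $\beta\in\F_{q^{n'}}$ is normal over $\F_q$, one multiplies or composes $\beta$ with a fixed element whose $q$-order is a chosen degree-$k$ divisor of $x^n-1$, so that the resulting element has $\F_q$-span of dimension exactly $n-k$. Counting the fibers of this map gives the factor $q^{-k}$, while the base-change from $n$ to $n'$ and the passage through the $p$-part of $n$ account for the factor $p^{-t}$. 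Translating this counting into densities yields the pointwise inequality $\lambda_{q,k}(n)\ge \frac{1}{p^t q^k}\lambda_{q,0}(n')$ on the chosen family, which upon averaging and using the fact that $n'$ ranges over a corresponding cofinal family produces $\overline{\lambda}_{q,k}\ge \frac{\overline{\lambda}_{q,0}}{p^t q^k}$.

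Third, for the final assertion $\overline{\lambda}_{q,0}>1-\tfrac{1}{\sqrt q}-\tfrac{1}{q}$ when $q\ge 4$, I would cite the explicit lower bounds already established in~\cite{R} for the ordinary normal density; since $\overline{\lambda}_{q,0}$ is the mean value computed there, this reduces to quoting and specializing that estimate, and the positivity of $\overline{\lambda}_{q,k}$ then follows immediately because $p^t q^k$ is a fixed finite constant.

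The main obstacle I anticipate is making the averaging step rigorous: the pointwise bound $\lambda_{q,k}(n)\ge\varepsilon\,\lambda_{q,0}(n')$ holds only on a subfamily of $n$ (those with the prescribed $p$-adic and coprime structure), so one must verify that this subfamily has positive density and that the induced map $n\mapsto n'$ distributes the contributions of $\lambda_{q,0}$ densely enough that $\frac{1}{t}\sum_{n\le t}\lambda_{q,k}(n)$ inherits a positive liminf comparable to $\overline{\lambda}_{q,0}$. This is where one must be careful not to lose the constant: the cleanest route is to exhibit a single arithmetic progression (or a union of them) on which both the structural construction works uniformly and the mean value of $\lambda_{q,0}(n')$ along the image is bounded below by $\overline{\lambda}_{q,0}$ itself, so that no density is sacrificed beyond the explicit factor $\frac{1}{p^t q^k}$.
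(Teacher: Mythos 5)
Your global strategy coincides with the paper's: restrict to the progression $n=p^tu$ (where $t=\lfloor\log_pk\rfloor+1$ forces $p^t>k$ and hence $(X-1)^k\mid X^n-1$), prove a pointwise comparison with $\lambda_{q,0}$, average over this density-$1/p^t$ progression using the existence of both mean values (Theorem~\ref{thm:main}), and quote~\cite{R} for $\overline{\lambda}_{q,0}>0$ and the numerical bound for $q\ge 4$. The gap is in the mechanism of the pointwise comparison. You propose a correspondence sending normal elements of the \emph{subfield} $\F_{q^{n'}}$, $n'=n/p^t$, into the $k$-normal elements of $\F_{q^n}$ by ``multiplying or composing $\beta$ with a fixed element whose $q$-order is a degree-$k$ divisor of $X^n-1$''. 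This cannot work as stated, for dimensional reasons: a normal element $\beta$ of $\F_{q^{n'}}$ has $q$-order $X^{n'}-1$, so viewed inside $\F_{q^n}$ its conjugates span a space of dimension only $n'$, i.e.\ $\beta$ is $(n-n')$-normal there, and $n-n'=n(1-p^{-t})$ is far larger than $k$. Applying any polynomial in the Frobenius $\sigma:x\mapsto x^q$ to $\beta$ stays inside the $\F_q[\sigma]$-module generated by $\beta$ and can only shrink the span; multiplication by a fixed element gives no control; and the usual trick of adding a component with coprime $q$-order is unavailable precisely because $X^n-1=(X^{n'}-1)^{p^t}$, so every irreducible factor of $X^n-1$ already divides $X^{n'}-1$. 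The paper needs no construction at all: by Lemma~\ref{eq:count} one keeps the single divisor $F=(X-1)^k$, so $\lambda_{q,k}(p^tu)\ge q^{-p^tu}\,\Phi_q\bigl(\tfrac{X^{p^tu}-1}{(X-1)^k}\bigr)$, and by Lemma~\ref{lem:phi} the ratio $\Phi_q(F)/q^{\deg(F)}$ depends only on the \emph{distinct} irreducible factors of $F$; since $\tfrac{X^{p^tu}-1}{(X-1)^k}=\bigl(\tfrac{X^u-1}{X-1}\bigr)^{p^t}(X-1)^{p^t-k}$ has the same distinct irreducible factors as $X^u-1$ (this is exactly where $p^t>k$ enters), one gets $\lambda_{q,k}(p^tu)\ge q^{-k}\lambda_{q,0}(u)$ directly. (If you insist on a map, it must act within one field: $\beta\mapsto(\sigma-1)^k\beta$ sends normal elements of $\F_{q^n}$ to $k$-normal ones with all fibers of size $q^k$, and then $\lambda_{q,0}(p^tu)=\lambda_{q,0}(u)$ by the same irreducible-factor observation.)

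Separately, your bookkeeping counts $p^{-t}$ twice. The correct pointwise bound is $\lambda_{q,k}(p^tu)\ge q^{-k}\lambda_{q,0}(u)$, with the factor $p^{-t}$ arising only afterwards from the density of the progression in the average: since $\lambda_{q,k}\ge 0$, one has $\frac{1}{y}\sum_{n\le y}\lambda_{q,k}(n)\ge\frac{1}{yq^k}\sum_{u\le y/p^t}\lambda_{q,0}(u)\to\frac{\overline{\lambda}_{q,0}}{p^tq^k}$, using Theorem~\ref{thm:main} for $k=0$. If the pointwise inequality already carried the factor $\frac{1}{p^tq^k}$, as you wrote, then averaging over a family of density $1/p^t$ would only yield $\overline{\lambda}_{q,k}\ge\overline{\lambda}_{q,0}/(p^{2t}q^k)$, weaker than the corollary. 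Your closing remark (``no density sacrificed beyond $\frac{1}{p^tq^k}$'') describes the right outcome but is inconsistent with the pointwise inequality you stated; removing $p^{-t}$ from the pointwise bound fixes this, and the averaging step then needs nothing beyond nonnegativity and the existence of the two mean values, exactly as in the paper.
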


The proof of Theorem~\ref{thm:main} relies on standard ideas from Analytic Number Theory, combined with some estimates on sums involving arithmetic functions over the polynomial ring $\F_q[X]$.

The paper is structured as follows. In Section 2 we provide all the basic machinery that is further used in Section 3, were we prove Theorem~\ref{thm:main} and Corollary~\ref{cor:main}.

\section{Preparation}
In this section we provide some background material that is further used throughout the paper. We start with some basics on arithmetic functions over $\Z$ and $\F_q[X]$.

\begin{definition}
For positive integers $a, b$ and a monic polynomial $F\in \F_q[X]$, we define the following arithmetic functions:

\begin{enumerate}[(i)]
\item $\varphi(a)$ is the number of integers $1\le i\le a$ such that $\gcd(a, i)=1$;
\item if $\gcd(a, b)=1$, set $\ord_ba=\min\{j>0\,|\, a^j\equiv 1\pmod b\}$; 
\item set $\mu_q(1)=1$,  $\mu_q(F)=0$ if $F$ is not squarefree and $\mu_q(F)=(-1)^r$ if $F$ is the product of $r$ distinct irreducible polynomials over $\F_q$;
\item $\Phi_q(F)$ is the number of invertible elements in the quotient ring $\frac{\F_q[X]}{F(X)\F_q[X]}$.
\end{enumerate}
\end{definition}

The following result is directly verified.

\begin{lemma}\label{lem:phi}
For a monic polynomial $F\in \F_q[X]$, we have that 
$$\frac{\Phi_q(F)}{q^{\deg(F)}}=\prod_{H|F}\left(1-\frac{1}{q^{\deg(H)}}\right),$$
where $H$ runs  over the distinct monic divisors of $F$ that are irreducible over $\F_q$. In particular, 
$$\frac{\Phi_q(F)}{q^{\deg(F)}}=\sum_{G|F}\frac{\mu_q(G)}{q^{\deg(G)}},$$
where $G$ runs over the monic divisors of $F$, defined over $\F_q$.
\end{lemma}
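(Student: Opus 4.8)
The plan is to transport the classical proof of the identity $\varphi(a)/a=\prod_{p\mid a}(1-1/p)=\sum_{d\mid a}\mu(d)/d$ over $\Z$ to the polynomial ring $\F_q[X]$, exploiting the fact that $\Phi_q$ is a multiplicative function. First I would verify multiplicativity: if $F_1,F_2\in\F_q[X]$ are coprime monic polynomials, the Chinese Remainder Theorem furnishes a ring isomorphism
$$\frac{\F_q[X]}{F_1F_2\F_q[X]}\cong \frac{\F_q[X]}{F_1\F_q[X]}\times\frac{\F_q[X]}{F_2\F_q[X]},$$
under which a class is invertible exactly when both of its components are invertible. Hence $\Phi_q(F_1F_2)=\Phi_q(F_1)\Phi_q(F_2)$.

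Next I would evaluate $\Phi_q$ on a prime power directly. If $P\in\F_q[X]$ is monic irreducible of degree $d$ and $e\ge 1$, the residue classes modulo $P^e$ are represented by polynomials of degree less than $de$, so the quotient ring has $q^{de}$ elements; a class is invertible if and only if its representative is not divisible by $P$. The non-invertible classes are precisely the multiples $P\cdot h$ with $\deg h<d(e-1)$, of which there are $q^{d(e-1)}$. Therefore
$$\Phi_q(P^e)=q^{de}-q^{d(e-1)}=q^{de}\left(1-\frac{1}{q^{d}}\right),$$
which gives $\Phi_q(P^e)/q^{\deg(P^e)}=1-q^{-\deg P}$.

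Writing $F=\prod_i P_i^{e_i}$ with the $P_i$ distinct monic irreducibles and combining the two previous steps yields the product formula, since the $P_i$ are exactly the distinct monic irreducible divisors $H$ of $F$. Finally, expanding $\prod_{H\mid F}\left(1-q^{-\deg H}\right)$ produces a sum over products $G$ of distinct irreducible divisors of $F$, where the term attached to such a $G$ equals $(-1)^{r}q^{-\deg G}$ with $r$ the number of irreducible factors of $G$; by the definition of $\mu_q$ this is $\mu_q(G)q^{-\deg G}$ for squarefree $G$. Since $\mu_q(G)=0$ for every non-squarefree monic polynomial, the sum extends without change to all monic divisors $G$ of $F$, giving the second identity.

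No step presents a genuine obstacle, which is consistent with the lemma being labelled as directly verified; the only point demanding care is the final expansion, where one must recognize that the divisors $G$ omitted from the product (the non-squarefree ones) are precisely those annihilated by $\mu_q$, so that passing from the product over irreducible divisors to the full Möbius sum over all divisors is harmless.
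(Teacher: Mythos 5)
Your proof is correct and is exactly the standard direct verification the paper has in mind (the paper offers no written proof, stating only that the lemma "is directly verified"): multiplicativity of $\Phi_q$ via the Chinese Remainder Theorem, the computation $\Phi_q(P^e)=q^{de}-q^{d(e-1)}$ at prime powers, and the expansion of the resulting product into the M\"obius sum using $\mu_q(G)=0$ for non-squarefree $G$. Nothing is missing, and no alternative route is involved.
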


For a polynomial $F\in \F_q[X]$ that is not divisible by $X$, it is clear that there exists a positive integer $j$ such that $F(X)|X^j-1$. We set $\ord(F)=\min\{j>0\,|\, F(X)|X^j-1\}$, the order of $F$. We have the following result.

\begin{lemma}\label{lem:ord}
Let $F, G\in \F_q[X]$ be polynomials that are not divisible by $X$. Then the following hold:

\begin{enumerate}[(i)]
\item $\ord(FG)\ge \max\{\ord(F), \ord(G)\}$;
\item if $n$ is a positive integer, then $F(X)|X^n-1$ if and only if $n$ is divisible by $\ord(F)$;  
\item if $F$ is irreducible, $E=\ord(F)$ and $i=\deg(F)$, then $\ord_Eq=i$;
\item conversely, for each positive integer $E$ such that $\ord_Eq=i$, there exist $\frac{\varphi(E)}{i}$ monic irreducible polynomials $F\in \F_q[X]$ such that $\ord(F)=E$ and $\deg(F)=i$. 
\end{enumerate}
\end{lemma}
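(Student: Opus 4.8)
The plan is to establish the four items in turn, with the first two resting on elementary divisibility manipulations and the last two on the correspondence between the order of an irreducible polynomial and the multiplicative order of its roots in a finite field. For (i), I would set $N=\ord(FG)$; since $F$ divides $FG$ and $FG\mid X^N-1$, we get $F\mid X^N-1$, and minimality of $\ord(F)$ forces $N\ge \ord(F)$. The same argument applied to $G$ gives $N\ge \ord(G)$, which yields the inequality. For (ii), write $E=\ord(F)$. The backward implication follows from the identity that $X^E-1$ divides $X^n-1$ whenever $E\mid n$, together with $F\mid X^E-1$. For the forward implication I would divide with remainder, writing $n=mE+r$ with $0\le r<E$, and use that $F$ divides both $X^{mE}-1$ and $X^n-1$ to deduce $F\mid X^r-1$; the minimality of $E$ then forces $r=0$, i.e. $E\mid n$.

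The heart of (iii) is to identify $\ord(F)$ with the multiplicative order of a root of $F$. Let $\theta\in\overline{\F_q}$ be a root of the irreducible polynomial $F$; since $F\nmid X$ we have $\theta\in\overline{\F_q}^\times$, and because $F$ is the minimal polynomial of $\theta$ the condition $F\mid X^j-1$ is equivalent to $\theta^j=1$. Hence $E=\ord(F)$ is exactly the multiplicative order of $\theta$. As $F$ is irreducible of degree $i$, we have $\F_q(\theta)=\F_{q^i}$, so $i$ is the least positive integer with $\theta\in\F_{q^i}^\times$. On the other hand $\theta\in\F_{q^j}^\times$ holds if and only if $E\mid q^j-1$, i.e. $q^j\equiv 1\pmod E$, whose least solution is by definition $\ord_E q$. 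Comparing the two descriptions of this least index gives $\ord_E q=i$.

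For (iv) I would run this dictionary backwards. Given $E$ with $\ord_E q=i$, we have $E\mid q^i-1$, so the cyclic group $\F_{q^i}^\times$ contains exactly $\varphi(E)$ elements of order $E$. Each such $\theta$ satisfies $\F_q(\theta)=\F_{q^i}$: any containment $\theta\in\F_{q^j}^\times$ would force $E\mid q^j-1$, hence $i=\ord_E q\le j$, so the minimal polynomial of $\theta$ has degree exactly $i$ and, by (iii), order $E$. Partitioning the $\varphi(E)$ elements of order $E$ into Frobenius orbits $\{\theta,\theta^q,\ldots,\theta^{q^{i-1}}\}$, each of size $i$ and each corresponding to one monic irreducible polynomial of degree $i$ and order $E$, produces exactly $\varphi(E)/i$ such polynomials.

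The main obstacle is the bookkeeping underlying (iii) and (iv): one must verify that all roots of an irreducible $F$ share a common multiplicative order (they are Frobenius conjugates), that this common order coincides with $\ord(F)$, and that the smallest field containing a root of order $E$ is governed precisely by $\ord_E q$. Once this dictionary between the order of a polynomial and the multiplicative order of its roots is set up carefully, both the degree computation and the orbit count follow directly.
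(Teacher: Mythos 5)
Your proof is correct, and it takes a more self-contained route than the paper. The paper dispatches (i) and (ii) as ``direct from the definition of $\ord(F)$'' --- your divisibility argument for (i) and division-with-remainder argument for (ii) are exactly those details spelled out --- but for (iii) and (iv) the paper simply cites Theorem 2.47 of Lidl--Niederreiter (the factorization of the $E$-th cyclotomic polynomial over $\F_q$ into $\varphi(E)/\ord_Eq$ monic irreducibles, each of degree $\ord_Eq$), together with the observation that any monic irreducible of order $E$ is a factor of that cyclotomic polynomial. You instead rebuild this from scratch: you identify $\ord(F)$ with the common multiplicative order of the roots of $F$, identify $\deg(F)$ with the least $j$ such that a root lies in $\F_{q^j}$, and then count the monic irreducibles of degree $i$ and order $E$ by partitioning the $\varphi(E)$ elements of order $E$ in $\F_{q^i}^\times$ into Frobenius orbits of size $i$. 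Your version makes the lemma independent of the reference (cyclotomic polynomials never need to be mentioned), at the cost of redoing standard textbook material; the paper's citation is shorter and delegates precisely this argument to the textbook. Two small points: in (iv), to conclude there are \emph{exactly} $\varphi(E)/i$ such polynomials rather than at least that many, you should state explicitly that every monic irreducible of degree $i$ and order $E$ arises from one of your orbits --- this is immediate from the dictionary in (iii), since its roots have multiplicative order $E$, but it is the step that makes your correspondence a bijection. Also, ``$F\nmid X$'' in (iii) should read ``$X\nmid F$''.
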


\begin{proof}
    Items (i) and (ii) follow directly by the definition of $\ord(F)$. Items (iii) and (iv) follow by Theorem 2.47 in~\cite{LiNi} and the fact that  any monic irreducible polynomial $F$ with $\ord(F)=E$ is necessarily an irreducible factor (over $\F_q$) of the $E$-th cyclotomic polynomial.

\end{proof}

The following lemma provides a formula for the number of elements in $\F_{q^n}$ that are $k$-normal over $\F_q$. Its proof follows directly by  Theorem 3.5 in~\cite{HMPT}.

\begin{lemma}\label{eq:count} The number $\alpha_{q, n, k}$   of elements in $\F_{q^n}$ that are $k$-normal over $\F_q$ is given by
\begin{equation*}\label{eq:num-k-normal}\alpha_{q, n, k}=\sum_{F|X^n-1\atop{\deg(F)=k}}\Phi_q\left(\frac{X^n-1}{F}\right),\end{equation*}
where the divisors are monic and polynomial division is over $\F_q$.
\end{lemma}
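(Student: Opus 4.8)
The plan is to translate $k$-normality into a statement about a natural $\F_q[X]$-module structure on $\fqn$, and then reduce the count to the function $\Phi_q$. Let $\sigma\colon\fqn\to\fqn$ be the Frobenius map $\sigma(x)=x^q$. Since $\sigma$ is an $\F_q$-linear operator with $\sigma^n=\mathrm{id}$, the rule $X\cdot x=\sigma(x)$ makes $\fqn$ into a module over $\F_q[X]$ that is annihilated by $X^n-1$, equivalently a module over $R=\F_q[X]/(X^n-1)$. For $\alpha\in\fqn$ I would let $\oord(\alpha)$ be the monic generator of the annihilator ideal $\{P\in\F_q[X]\,:\,P(\sigma)(\alpha)=0\}$, which is necessarily a divisor of $X^n-1$. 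The $\F_q$-span of $\{\alpha,\alpha^q,\ldots,\alpha^{q^{n-1}}\}$ is exactly the cyclic submodule $\F_q[X]\cdot\alpha\cong \F_q[X]/(\oord(\alpha))$, so its dimension over $\F_q$ equals $\deg(\oord(\alpha))$. Therefore $\alpha$ is $k$-normal precisely when $\deg(\oord(\alpha))=n-k$, which holds if and only if $\oord(\alpha)=\frac{X^n-1}{F}$ for a unique monic divisor $F$ of $X^n-1$ with $\deg(F)=k$.

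The main step is then to count, for a fixed monic divisor $h$ of $X^n-1$, how many $\alpha\in\fqn$ satisfy $\oord(\alpha)=h$. Here I would use the normal basis theorem, which yields a normal element $g$; this $g$ is a free generator of $\fqn$ as an $R$-module, so every $\alpha$ can be written uniquely as $\alpha=P(\sigma)(g)$ with $P$ ranging over the $q^n$ residues modulo $X^n-1$. A direct computation shows that $\oord(\alpha)=\frac{X^n-1}{\gcd(P,X^n-1)}$. Consequently $\oord(\alpha)=h$ if and only if $\gcd(P,X^n-1)=\frac{X^n-1}{h}$, and writing $P=\frac{X^n-1}{h}\cdot P_1$ puts these $P$ in bijection with the residues $P_1$ modulo $h$ that are coprime to $h$. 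By the definition of $\Phi_q$, the number of such $\alpha$ is exactly $\Phi_q(h)$.

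Putting $h=\frac{X^n-1}{F}$ and summing over all monic divisors $F$ of $X^n-1$ with $\deg(F)=k$ yields
\begin{equation*}
\alpha_{q,n,k}=\sum_{F|X^n-1,\ \deg(F)=k}\Phi_q\!\left(\frac{X^n-1}{F}\right),
\end{equation*}
which is the desired formula. The only genuinely delicate point is the counting step, and it rests on two facts: the freeness of $\fqn$ over $R$ (equivalently, the existence of a normal basis) and the gcd description of the order polynomial. Both are classical and are precisely the content of Theorem~3.5 in~\cite{HMPT}, so in the write-up I would either invoke that theorem directly or reproduce the short module-theoretic argument above.
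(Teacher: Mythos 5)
Your proposal is correct, but it does substantially more work than the paper itself does: the paper's entire proof of this lemma is a one-line citation, observing that the formula is precisely Theorem 3.5 of~\cite{HMPT} (stated there as a sum of $\Phi_q(h)$ over the monic divisors $h$ of $X^n-1$ of degree $n-k$, which matches your formula under the substitution $h=\frac{X^n-1}{F}$). What you have written is, in effect, a self-contained proof of that cited theorem: the Frobenius module structure on $\F_{q^n}$ over $\F_q[X]/(X^n-1)$, the identification of the span of the conjugates with the cyclic submodule $\F_q[X]\cdot\alpha\cong\F_q[X]/(\oord(\alpha))$ so that $k$-normality becomes $\deg(\oord(\alpha))=n-k$, the freeness of $\F_{q^n}$ as an $\F_q[X]/(X^n-1)$-module via the normal basis theorem, and the gcd computation showing that exactly $\Phi_q(h)$ elements have order polynomial $h$. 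Each step checks out; in particular the identity $\gcd\left(\frac{X^n-1}{h}P_1,\,X^n-1\right)=\frac{X^n-1}{h}\gcd(P_1,h)$ makes your bijection with residues modulo $h$ coprime to $h$ work, and the number of those residues is $\Phi_q(h)$ by definition. The trade-off is the usual one: the paper's citation treats the lemma as known background (reasonable, since the paper's contribution lies in the mean-value analysis, not in this count), whereas your argument makes the result independent of~\cite{HMPT} at the cost of half a page, essentially reproducing the standard module-theoretic argument underlying that reference. Either form would be acceptable in the write-up, as you yourself note.
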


We end this section with some useful inequalities. From the main result in~\cite{NR83}, we have the following estimate.

\begin{lemma}\label{lem:estimate-divisor} If $\sigma_0(m)$ is the number of positive divisors of $m$, then for every $m\ge 3$,
$$\sigma_0(m)<m^{\frac{1.1}{\log\log m}}.$$
\end{lemma}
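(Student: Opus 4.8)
The plan is to reduce the stated inequality to the explicit majorization of the divisor function furnished by the main theorem of~\cite{NR83}. That result supplies an explicit absolute constant $c_0$, with $c_0<1.1$ (numerically $c_0=1.0660\ldots$, the maximum of the ratio $\frac{\log\sigma_0(m)\cdot\log\log m}{\log m}$ over the relevant range, which sits below the exponent $1.1$ chosen here as a convenient round number), such that
$$\sigma_0(m)\le m^{\frac{c_0}{\log\log m}}$$
holds for every $m\ge 3$. Equivalently, the cited bound says that $\frac{\log\sigma_0(m)\cdot\log\log m}{\log m}\le c_0$ on this range. Since our target exponent is the strictly larger $1.1$, the only thing left is to check that enlarging $c_0$ to $1.1$ upgrades the inequality to a strict one.

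First I would record that for $m\ge 3$ one has both $\log m>0$ and $\log\log m>0$, so the exponential map $x\mapsto m^{x/\log\log m}$ is strictly increasing. Combining this monotonicity with $c_0<1.1$ gives
$$\sigma_0(m)\le m^{\frac{c_0}{\log\log m}}<m^{\frac{1.1}{\log\log m}},$$
which is precisely the claimed bound; strictness is automatic, as the positive gap $1.1-c_0$ is multiplied by the strictly positive quantity $\frac{\log m}{\log\log m}$. Thus the entire proof collapses to the single comparison $c_0<1.1$ together with the positivity of $\log\log m$, which is exactly why the hypothesis $m\ge 3$ is imposed.

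The only point requiring any care is the precise shape in which~\cite{NR83} states its estimate. If the cited bound is formulated only for $m$ beyond some explicit threshold $M_0$ (the asymptotically sharp constant being most delicate for moderate-sized, divisor-rich $m$, where $\log\log m$ is still small), then I would close the argument with a direct verification of the finitely many remaining cases $3\le m<M_0$, comparing $\sigma_0(m)$ against $m^{1.1/\log\log m}$ numerically; for such small $m$ the exponent $1.1/\log\log m$ is large, so the inequality holds with substantial room to spare. I expect this bookkeeping of the threshold and the small cases to be the only genuine obstacle, the heart of the matter being the trivial numerical fact that the Nicolas--Robin constant lies below $1.1$.
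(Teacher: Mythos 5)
Your proposal is correct and takes essentially the same route as the paper: the paper gives no argument beyond citing the main result of~\cite{NR83} (the Nicolas--Robin bound $\sigma_0(m)\le m^{c_0/\log\log m}$ for $m\ge 3$ with $c_0=1.5379\log 2\approx 1.066$), from which the lemma follows exactly as you describe, since $c_0<1.1$ and $\log\log m>0$ for $m\ge 3$. Your precaution about a possible threshold $M_0$ is unnecessary here, as the cited inequality is stated for all $m\ge 3$, but it does not detract from the argument.
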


The following lemma provide some basic inequalities that can be directly verified. We omit details.

\begin{lemma}\label{lem:ineq}
For every positive integer $N$, the following hold:

\begin{enumerate}[(i)]
\item $\left(1+\frac{1}{N}\right)^N<e$, where $e$ is the Euler number;
\item $\sum_{j=1}^N\frac{1}{j}\le \log j+1$;
\item the number of monic irreducible polynomials of degree $N$ over $\F_q$ is at most $\frac{q^N}{N}$.
\item $\sum_{i|N}\varphi(i)=N$. 
\end{enumerate}

\end{lemma}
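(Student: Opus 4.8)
The plan is to prove the four assertions independently, since each is a classical elementary fact; I will group them by the technique involved and keep the verifications to the one-line level the authors intend.

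For (i) and (ii) I would pass to the natural logarithm and exploit the monotonicity of $1/x$. For (i), note that $\log(1+x)<x$ for every $x>0$ (integrate the strict inequality $1/(1+t)<1$ over $[0,x]$), so the choice $x=1/N$ gives $N\log\left(1+\frac{1}{N}\right)<1$, and exponentiating yields $\left(1+\frac{1}{N}\right)^N<e$. For (ii), where I read the right-hand side as $\log N+1$ (the bound variable $j$ on the left cannot reappear on the right), I would compare the sum with an integral: since $1/x$ is decreasing, $\frac{1}{j}\le\int_{j-1}^{j}\frac{dx}{x}$ for each $j\ge 2$, whence
$$\sum_{j=1}^{N}\frac{1}{j}\le 1+\sum_{j=2}^{N}\int_{j-1}^{j}\frac{dx}{x}=1+\int_{1}^{N}\frac{dx}{x}=1+\log N.$$

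For (iii) the decisive observation is a root-counting argument in $\F_{q^N}$: every monic irreducible polynomial of degree $N$ over $\F_q$ is separable and splits completely in $\F_{q^N}$, contributing exactly $N$ distinct roots, each an element of $\F_{q^N}$ of degree precisely $N$ over $\F_q$; and distinct irreducibles have disjoint root sets, because the minimal polynomial of an element is unique. Therefore $N$ times the number of such polynomials equals the number of elements of $\F_{q^N}$ of degree $N$, which is at most $|\F_{q^N}|=q^N$, giving the bound $q^N/N$. For (iv) I would partition $\{1,\dots,N\}$ according to the value of $\gcd(m,N)$: for each divisor $d\mid N$, the integers $m$ with $\gcd(m,N)=N/d$ correspond bijectively to $\{1\le a\le d:\gcd(a,d)=1\}$, a set of cardinality $\varphi(d)$, and summing over all $d\mid N$ counts each element of $\{1,\dots,N\}$ exactly once, so $\sum_{d\mid N}\varphi(d)=N$.

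None of the four steps is a genuine obstacle. The only one carrying any real content is (iii), where one must invoke that the roots of a monic irreducible of degree $N$ are exactly certain degree-$N$ elements of $\F_{q^N}$ forming a single Frobenius orbit, so that the total root count is bounded by $q^N$; the remaining three collapse to a convexity or integral estimate and a standard counting bijection, which is presumably why the authors omit the details.
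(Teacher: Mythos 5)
Your proofs are all correct and are precisely the standard verifications the paper has in mind: the paper offers no proof at all, stating only that the lemma ``can be directly verified'' and omitting details, so your write-up simply supplies what was left to the reader (the log-comparison for (i), the integral test for (ii), root-counting in $\F_{q^N}$ with disjoint root sets for (iii), and the gcd-partition of $\{1,\dots,N\}$ for (iv)). You were also right to read the right-hand side of (ii) as $\log N+1$; the $\log j$ in the statement is a typo.
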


\section{Proof of Theorem~\ref{thm:main}}
In what follows, for real valued functions $\mathcal F, \mathcal G$, we write $\mathcal F(t)=O(\mathcal G(t))$ if $|\mathcal F(t)|\le C\cdot |\mathcal G(t)|$ for some absolute constant $C>0$ and write $\mathcal F(t)=o(\mathcal G(t))$ if $\lim\limits_{t\to +\infty}\frac{\mathcal F(t)}{\mathcal G(t)}=0$.

Let $\mathcal M_k$ denote the set of monic polynomials $F\in \F_q[X]$ such that $F(X)$ is not divisible by $X$ and set $\mathcal M=\bigcup_{k\ge 0}\mathcal M_k$. For each $F\in \mathcal M_k$ and each real number $t>0$, set
$$S_F(t)=\sum_{1\le n\le t\atop F|X^n-1}\frac{\Phi_q\left(\frac{X^n-1}{F}\right)}{q^{n-k}}.$$

From Lemma~\ref{eq:count}, we have that 

$$\frac{1}{t}\sum_{1\le n\le t}\lambda_{q, k}(n)=\frac{1}{tq^k}\sum_{F\in \mathcal M_k}S_F(t).$$
Since the set $\mathcal M_k$ is finite it suffices to prove that, for each $F\in \mathcal M_k$, the limit $\lim\limits_{t\to +\infty}\frac{1}{t}\sum_{F\in \mathcal M_k}S_F(t)$ exists. From Lemma~\ref{lem:phi}, we have that

\begin{equation}\label{eq:1}S_F(t)=\sum_{1\le n\le t\atop F|X^n-1}\sum_{G|\frac{X^n-1}{F}}\frac{\mu_q(G)}{q^{\deg(G)}}=\sum_{1\le n\le t}\sum_{FG|X^n-1}\frac{\mu_q(G)}{q^{\deg(G)}}.\end{equation}
For each $G\in \mathcal M$, set $a_G=\ord(FG)$. From Lemma~\ref{lem:ord}, the term $\frac{\mu_q(G)}{q^{\deg(G)}}$ contributes only for the integers $n$ that are divisible by $a_G$. From this fact,  Eq.~\eqref{eq:1} implies that

\begin{equation}\label{eq:2}S_F(t)= \sum_{G\in \mathcal M\atop 1\le a_G\le t}\left\lfloor\frac{t}{a_G}\right\rfloor\frac{\mu_q(G)}{q^{\deg(G)}}=t\cdot M_F(t)+R_F(t),
\end{equation}
where $M_F(t)=\sum_{G\in \mathcal M\atop 1\le a_G\le t}\frac{\mu_q(G)}{a_Gq^{\deg(G)}}$ and $R_F(t)=\sum_{G\in \mathcal M\atop 1\le a_G\le t}\left\{\frac{t}{a_G}\right\}\frac{\mu_q(G)}{q^{\deg(G)}}$.
From Lemma~\ref{lem:ord}, we have that $a_G\ge \ord(G)$. Therefore, we obtain that $$|M_F(t)|\le \sum_{G\in \mathcal M^*\atop 1\le \ord(G)\le t}\frac{1}{\ord(G)q^{\deg(G)}}=:M_F^*(t),$$ and $$|R_F(t)|\le \sum_{G\in \mathcal M^*\atop 1\le \ord(G)\le t}\frac{1}{q^{\deg(G)}}=:R_F^*(t),$$ where $\mathcal M^*$ is the set of squarefree elements of $\mathcal M$. Since each term in the sums defining $M_F^*$ and $R_F^*$ are positive, Theorem~\ref{thm:main} follows by Eq.~\eqref{eq:2} and the following result.

\begin{proposition}
The following hold:

\begin{enumerate}[(i)]
    \item the sum $\sum_{G\in \mathcal M^*}\frac{1}{\ord(G)q^{\deg(G)}}$ is finite;

    \item $R_F^*(t)=o(t)$.
\end{enumerate}
\end{proposition}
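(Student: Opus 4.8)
The plan is to establish (i) and (ii) together by one device, which I will call \emph{peeling off an irreducible factor of maximal degree}. Given $G\in\mathcal M^*$, factor it into distinct monic irreducibles and let $P_0$ be one of largest degree, writing $G=P_0G_1$; then every irreducible factor of $G_1$ has degree at most $\deg P_0$, and Lemma~\ref{lem:ord}(i) yields the crucial lower bound $\ord(G)\ge\ord(P_0)$. Summing the cofactor $G_1$ over \emph{all} squarefree polynomials in $\mathcal M$ whose irreducible factors have degree $\le D:=\deg P_0$ only introduces nonnegative terms, so in both sums the contribution of the cofactors is controlled by the Euler product below, whose estimate is the first step I would prove:
\[
\prod_{\deg P\le D}\left(1+\frac{1}{q^{\deg P}}\right)\le\exp\left(\sum_{\deg P\le D}\frac{1}{q^{\deg P}}\right)\le\exp\left(\sum_{i=1}^{D}\frac1i\right)\le e\,D,
\]
using $\log(1+x)\le x$, the count $q^i/i$ of irreducibles of degree $i$ from Lemma~\ref{lem:ineq}(iii), and the harmonic bound of Lemma~\ref{lem:ineq}(ii).

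Granting this, charging the factor $e\,\deg P_0$ against $\frac{1}{\ord(P_0)q^{\deg P_0}}$ reduces (i) to showing that $\sum_{P_0}\frac{\deg P_0}{\ord(P_0)\,q^{\deg P_0}}<\infty$, a sum over monic irreducibles. Reparametrising by order through Lemma~\ref{lem:ord}(iv) (there are $\varphi(E)/\ord_E q$ irreducibles of order $E$, each of degree $\ord_E q$) turns this into $\sum_{E}\frac{\varphi(E)}{E\,q^{\ord_E q}}$. I would then group the $E$ by their common degree $i=\ord_E q$; since $\ord_E q=i$ forces $E\mid q^i-1$, the inner sum over such $E$ is at most $\sum_{E\mid q^i-1}1=\sigma_0(q^i-1)$, so the whole quantity is $\le\sum_{i\ge1}q^{-i}\sigma_0(q^i-1)$. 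By Lemma~\ref{lem:estimate-divisor} one has $\sigma_0(q^i-1)=q^{o(i)}$, whence this series converges geometrically and (i) follows.

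For (ii) I would peel in exactly the same way but retain only the weaker constraint $\ord(P_0)\le t$, obtaining $R_F^*(t)\le e\sum_{\ord(P_0)\le t}\frac{\deg P_0}{q^{\deg P_0}}=e\sum_{E\le t}\frac{\varphi(E)}{q^{\ord_E q}}$ after the reparametrisation. Grouping by $i=\ord_E q$ and splitting at $i=\log_q t$ is the decisive manoeuvre. For $i\le\log_q t$ one has $q^i\le t$, so every $E\mid q^i-1$ is admissible and Lemma~\ref{lem:ineq}(iv) gives $\sum_{E\mid q^i-1}\varphi(E)=q^i-1$; these contribute $\le\sum_{i\le\log_q t}q^{-i}(q^i-1)=O(\log t)$. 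For $i>\log_q t$ I would bound $\varphi(E)\le t$ and the number of admissible $E$ by $\sigma_0(q^i-1)$, so the tail is $\le t\sum_{i>\log_q t}q^{-i}\sigma_0(q^i-1)$; by Lemma~\ref{lem:estimate-divisor} this is $\le t\cdot t^{-(1-\varepsilon(t))}=t^{\varepsilon(t)}$ with $\varepsilon(t)=1.1/\log\log t\to0$, hence $o(t)$. Therefore $R_F^*(t)=O(\log t)+o(t)=o(t)$.

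The main obstacle throughout is the factor $\frac{1}{\ord(G)}$: since $\ord(G)$ is the least common multiple of the orders of the irreducible factors of $G$, it can be as small as the order of a single factor while $\deg G$ is large, and the crude bound $\ord(G)\ge\deg G$ (or a full Euler product over all irreducibles) leads to a divergent series. The maximal-degree peeling is precisely what escapes this, since it preserves the honest inequality $\ord(G)\ge\ord(P_0)$ while charging only the harmless factor $e\,\deg P_0$ for the cofactor. The remaining delicate point, and the reason Lemma~\ref{lem:estimate-divisor} is indispensable, is that under the reparametrisation many distinct moduli $E$ collapse onto a single degree $i=\ord_E q$; their number is governed by $\sigma_0(q^i-1)$, and it is only the subpolynomial growth $\sigma_0(q^i-1)=q^{o(i)}$ that makes $q^{-i}\sigma_0(q^i-1)$ summable in (i) and forces the tail exponent $1.1/\log\log t$ to vanish in (ii).
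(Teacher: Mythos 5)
Your proposal is correct and is essentially the paper's own proof: your maximal-degree peeling is exactly the paper's decomposition into the sets $\mathcal M(i,f)$, your Euler-product estimate $e\cdot \deg P_0$ is the paper's use of Lemma~\ref{lem:ineq}, and your reparametrisation by order together with the bound through $\sigma_0(q^i-1)$ follows Lemma~\ref{lem:ord}(iii)--(iv) and Lemma~\ref{lem:estimate-divisor} just as in the text. The only minor divergence is in part (ii), where the paper splits at $q^i\ge t^2$ and uses the crude bound $\delta(i,t)\le \sum_{1\le E\le t}E\le t^2$ (so it needs no divisor estimate there and gets $R_F^*(t)=O(\log t)$), while you split at $q^i>t$ and invoke $\sigma_0(q^i-1)$ in the tail, which yields the weaker but still sufficient bound $t^{O(1/\log\log t)}=o(t)$.
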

\begin{proof}
    We prove the items separately.

    \begin{enumerate}[(i)]
        \item For each integer $i\ge 1$, let $\mathcal I_i$ be the set of monic  polynomials of degree $i$ that are irreducible over $\F_q$. Also, for each integer $i\ge 1$ and each $f\in \mathcal I_i$, let $\mathcal M(i, f)$ be the set of polynomials $G\in \mathcal M^*$ such that $f$ divides $G$ and every irreducible divisor of $G$ has degree at most $i$. Hence we have that 

        $$\sum_{G\in \mathcal M^*}\frac{1}{\ord(G)q^{\deg(G)}}\le 1+\sum_{i\ge 1}\sum_{f\in \mathcal I_i}\sum_{G\in \mathcal M(i, f)} \frac{1}{\ord(G)q^{\deg(G)}}.$$
        If $G\in \mathcal M(i, f)$, then $f$ divides $G$. From Lemma~\ref{lem:ord}, we obtain that $\ord(G)\ge \ord(f)$ and so 
$$\sum_{i\ge 1}\sum_{f\in \mathcal I_i}\sum_{G\in \mathcal M(i, f)} \frac{1}{\ord(G)q^{\deg(G)}}\le \sum_{i\ge 1}\sum_{f\in \mathcal I_i}\frac{1}{\ord(f)}\sum_{G\in \mathcal M(i, f)} \frac{1}{q^{\deg(G)}}.$$
We observe that each element of $\mathcal M(i, f)$ is written uniquely as $\prod_{g\in C_i}g^{e_g}$, where $C_i=\bigcup_{1\le j\le i}\mathcal I_j$, $e_g\in \{0, 1\}$ and $e_f=1$. Hence 
$$\sum_{G\in \mathcal M(i, f)} \frac{1}{q^{\deg(G)}}\le \frac{1}{q^i}\prod_{j=1}^i\left(1+\frac{1}{q^j}\right)^{|\mathcal I_j|}.$$
From Lemma~\ref{lem:ineq}, we obtain  that
$$\prod_{j=1}^i\left(1+\frac{1}{q^j}\right)^{|\mathcal I_j|}\le \prod_{j=1}^i\left(1+\frac{1}{q^j}\right)^{\frac{q^j}{j}}\le e^{\sum_{j=1}^i1/j}\le e\cdot i.$$
Therefore, we obtain that 
$$\sum_{i\ge 1}\sum_{f\in \mathcal I_i}\frac{1}{\ord(f)}\sum_{G\in \mathcal M(i, f)} \frac{1}{q^{\deg(G)}}\le e\sum_{i\ge 1}\sum_{f\in \mathcal I_i}\frac{i}{q^i\ord(f)}.$$
From items (iii) and (iv) of Lemma~\ref{lem:ord}, we have that

$$\sum_{i\ge 1}\sum_{f\in \mathcal I_i}\frac{i}{q^i\ord(f)}=\sum_{i\ge 1}\frac{i}{q^i}\cdot \sum_{E\ge 1\atop \ord_Eq=i}\frac{1}{E}\cdot \frac{\varphi(E)}{i}=\sum_{i\ge 1}\frac{1}{q^i}\sum_{E\ge 1\atop \ord_Eq=i}\frac{\varphi(E)}{E}.$$
Observe that $E|q^i-1$ whenever $\ord_Eq=i$. Moreover, $\frac{\varphi(E)}{E}<1$ for every $E\ge 1$. From these facts, we obtain that 
$$\sum_{E\atop \ord_Eq=i}\frac{\varphi(E)}{E}<\sigma_0(q^i-1).$$ From Lemma~\ref{lem:estimate-divisor}, $\sigma_0(q^i-1)=O(q^{i/2})$. Hence $$\sum_{i\ge 1}\frac{1}{q^i}\sum_{E\atop \ord_Eq=i}\frac{\varphi(E)}{E}=O\left(\sum_{i\ge 1}\frac{1}{q^{i/2}}\right).$$ Since $q^{1/2}>1$, the series $\sum_{i\ge 1}\frac{1}{q^{i/2}}$ converges, from where the result follows.

\item Fix $t\ge 1$. Similarly to the previous item, we have that 

$$R_F^*(t)=\sum_{G\in \mathcal M^*\atop 1\le \ord(G)\le t}\frac{1}{q^{\deg(G)}}\le 1+\sum_{1\le i\le t}\sum_{f\in I_i\atop 1\le \ord(f)\le t}\frac{e i}{q^i}=1+e\sum_{1\le i\le t}\frac{i}{q^i}\sum_{1\le E\le t\atop \ord_Eq=i}\frac{\varphi(E)}{i}.$$
Therefore, 

$$R_F^*(t)\le 1+e\sum_{1\le i\le t}\frac{\delta(i, t)}{q^i},$$
where $\delta(i, t)=\sum_{1\le E\le t\atop \ord_Eq=i}\varphi(E)$. If $q^i\ge t^2$, we have that 
$$\delta(i, t)= \sum_{1\le E\le t\atop \ord_Eq=i}\varphi(E)\le \sum_{1\le E\le t}E\le \frac{t(t+1)}{2}\le  t^2.$$
If $q^i<t^2$, we have the trivial bound $$\delta(i, t)\le \sum_{E|q^i-1}\varphi(E)=q^i-1<q^i.$$ In particular, 
\begin{align*}
    \sum_{1\le i\le t}\frac{\delta(i, t)}{q^i}<t^2\sum_{1\le i\le t\atop q^i\ge t^2}\frac{1}{q^i}+\sum_{1\le i\le t\atop q^i<t^2}1 &\le  t^2\sum_{i\ge 1\atop q^i\ge t^2}\frac{1}{q^i}+2\log_q t \\ {} &\le  \, t^2\cdot\frac{q}{t^2(q-1)}+2\log_q t=O(\log t).
\end{align*}

    \end{enumerate}
\end{proof}

\subsection{Proof of Corollary~\ref{cor:main}}

The case $k=0$ is trivial so we assume that $k\ge 1$. Set $F(X)=(X-1)^k$, hence $F(X)$ divides $x^{p^t\cdot u}-1$ for every integer $u\ge 1$, where $t=\lfloor\log_pk\rfloor+1$. Since $\frac{X^{p^tu}-1}{F(X)}=\left(\frac{X^u-1}{X-1}\right)^{p^t}(X-1)^{p^t-k}$ and $p^t>k$, we have that the irreducible factors of $\frac{X^{p^tu}-1}{F}$ and $X^u-1$ coincide. From Lemmas~\ref{lem:phi} and~\ref{eq:count}, we obtain that 
$$\lambda_{q, k}(p^tu)\ge \frac{\Phi_q\left(\frac{X^{p^tu}-1}{F}\right)}{q^n}=\frac{1}{q^k}\cdot \frac{\Phi_q(X^u-1)}{q^{\deg(X^u-1)}}=\frac{\lambda_{q, 0}(u)}{q^k}.$$
Therefore,
$$\frac{1}{y}\sum_{1\le n\le y}\lambda_{q, k}(n)\ge \frac{1}{y}\sum_{1\le p^tu\le y}\lambda_{q, k}(p^tu)\ge \frac{1}{yq^k}\sum_{1\le u\le y/p^t}\lambda_{q, 0}(u).$$
In particular,
$$\overline{\lambda}_{q, k}=\limsup\limits_{y\to+\infty} \frac{1}{y}\sum_{1\le n\le y}\lambda_{q, k}(n)\ge \limsup\limits_{y\to+\infty} \frac{1}{yq^k}\sum_{1\le u\le y/p^t}\lambda_{q, 0}(u)=\frac{\overline{\lambda}_{q, 0}}{p^tq^k}.$$
The inequalities $\overline{\lambda}_{q, 0}>0$ and $\overline{\lambda}_{q, 0}>1-\frac{1}{\sqrt{q}}-\frac{1}{q}$ for $q\ge 4$ follow by Theorems 4.2 and 4.8 of~\cite{R}, respectively.

\section*{Acknowledgments}
The author was supported by CNPq (309844/2021-5).




\end{document}